\numberwithin{equation}{section}
\theoremstyle{plain}
\newtheorem{theorem}{Theorem}[section]
\newtheorem{lemma}{Lemma}[section]
\theoremstyle{remark}
\newcommand{\rd}{\mathrm{d}}
\begin{document}
\begin{frontmatter}
\title{Minimaxity under the half-Cauchy prior}
\runtitle{Minimaxity under the half-Cauchy prior}

\begin{aug}
\author{\fnms{Yuzo} \snm{Maruyama}\thanksref{addr1,t1}\ead[label=e1]{maruyama@port.kobe-u.ac.jp}}
\and
\author{\fnms{Takeru} \snm{Matsuda}\thanksref{addr2,t2}\ead[label=e2]{matsuda@mist.i.u-tokyo.ac.jp}}

\runauthor{Y.~Maruyama and T.~Matsuda}

\address[addr1]{Kobe University \printead{e1} 
}

\address[addr2]{The University of Tokyo \& RIKEN Center for Brain Science \printead{e2}
}

\thankstext{t1}{supported by JSPS KAKENHI Grant Numbers 19K11852 and 22K11933}
\thankstext{t2}{supported by JSPS KAKENHI Grant Numbers 21H05205, 22K17865 and JST Moonshot Grant Number JPMJMS2024}
\end{aug}

\begin{abstract}
This is a follow-up paper of Polson and Scott (2012, Bayesian Analysis),
which claimed that the half-Cauchy prior is a sensible default prior for a scale parameter in hierarchical models. 
For estimation of a $p$-variate normal mean under the quadratic loss, they demonstrated that the Bayes estimator with respect to the half-Cauchy prior seems to be minimax through numerical experiments.
In this paper, we theoretically establish the minimaxity of the corresponding Bayes estimator using the interval arithmetric.
\end{abstract}

\begin{keyword}[class=MSC]
\kwd[Primary ]{62C20}
\end{keyword}

\begin{keyword}
\kwd{minimaxity}
\kwd{shrinkage}
\kwd{spike and slab prior}
\kwd{half-Cauchy prior}
\end{keyword}

\end{frontmatter}

\section{Introduction}
\label{sec:intro}
Consider a normal hierarchical model 
\begin{equation}\label{hierarchical.1}
y\mid\beta\sim \mathcal{N}_p(\beta,I_p), \quad  \\
\beta\mid\kappa \sim \mathcal{N}_p\bigl(0,\frac{1-\kappa}{\kappa}I_p\bigr), \quad
\kappa \sim \pi (\kappa), 
\end{equation}
where the hyperparameter $\kappa \in (0,1)$ specifies the shrinkage coefficient of 
posterior mean of $\beta$:
\begin{equation}\label{posterior.mean}
	\hat{\beta}(y) = {\rm E} [\beta \mid y] = (1-{\rm E}[\kappa \mid y]) y.
\end{equation}
\cite{Polson-Scott-2012} claimed that the hyperprior
\begin{equation}\label{prior.half}
 \pi(\kappa) \propto \kappa^{-1/2}(1-\kappa)^{-1/2},
\end{equation}
is a sensible default choice.
Since it has a U-shape with
\begin{equation}\label{cont.spike.slab}
 \lim_{\kappa\to 0}\pi(\kappa)=\lim_{\kappa\to 1}\pi(\kappa)=\infty,
\end{equation}
it may be regarded as a continuous spike and slab prior.
See \cite{carvalho2010horseshoe} for a related discussion in the context of horseshoe priors.
For the parameterization
\begin{equation}
 \lambda=\sqrt{\frac{1-\kappa}{\kappa}} \in (0,\infty),
\end{equation}
the prior \eqref{prior.half} is expressed as
\begin{equation}
 \pi(\lambda) \propto \frac{1}{1+\lambda^2}I_{(0,\infty)}(\lambda),
\end{equation}
which is the reason why the prior \eqref{prior.half} is called the half-Cauchy prior.

For estimation of a $p$-variate normal mean $\beta$ under the quadratic loss $\|\hat{\beta}-\beta\|^2$, 
the posterior mean \eqref{posterior.mean} is the minimizer of the corresponding Bayes risk and given by
\begin{equation}\label{hat.beta.0}
 \hat{\beta}
=\left(1-\frac {\int_0^1\kappa^{p/2+1/2}(1-\kappa)^{-1/2}\exp(-\kappa\|y\|^2/2)\rd\kappa}
{\int_0^1\kappa^{p/2-1/2}(1-\kappa)^{-1/2}\exp(-\kappa\|y\|^2/2)\rd\kappa}
\right)y.
\end{equation}
\cite{Polson-Scott-2012} derived 
expressions for the risk of the Bayes estimator \eqref{hat.beta.0}.
Recall that the usual estimator $\hat{\beta}=y$ is inadmissible for $p\geq 3$ although it is minimax for any $p$ \citep{Stein-1956, DSW-2018}.
Through numerical experiments, \cite{Polson-Scott-2012} discussed the minimaxity of 
the Bayes estimator \eqref{hat.beta.0} and compared it with the \cite{James-Stein-1961} estimator.

In this paper, we theoretically establish the minimaxity 
of the Bayes estimator \eqref{hat.beta.0} for $p\geq 7$, as follows.
\begin{theorem}\label{thm:main}
The Bayes estimator \eqref{hat.beta.0} under the half-Cauchy prior \eqref{prior.half} 
is minimax for $p\geq 7$.
\end{theorem}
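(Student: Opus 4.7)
The plan is to invoke Stein's unbiased risk estimate. Writing $\hat\beta(y)-y=-F(s)\,y$ with $s=\|y\|^2$ and $F(s)={\rm E}[\kappa\mid y]$ (a function of $s$ alone, by rotational invariance of the prior), the excess risk $R(\hat\beta,\beta)-p$ equals the $\beta$-expectation of $sF(s)^2-4sF'(s)-2pF(s)$. Hence minimaxity reduces to the pointwise inequality
\begin{equation}
 sF(s)^2-4sF'(s)-2pF(s)\le 0, \quad s\ge 0.
\end{equation}
Introducing the one-parameter family
\begin{equation}
 I_a(s)=\int_0^1 \kappa^{a}(1-\kappa)^{-1/2}e^{-\kappa s/2}\,\rd\kappa,
\end{equation}
we have $F=I_{(p+1)/2}/I_{(p-1)/2}$ and $\partial_s I_a=-I_{a+1}/2$, so clearing denominators recasts the condition as
\begin{equation}
 \Psi_p(s):=2s\,I_{(p+3)/2}(s)I_{(p-1)/2}(s)-s\,I_{(p+1)/2}(s)^2-2p\,I_{(p+1)/2}(s)I_{(p-1)/2}(s)\le 0,
\end{equation}
and the theorem becomes the verification of $\Psi_p(s)\le 0$ for every $s\ge 0$ and every $p\ge 7$.

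I would next split the verification into a tail and a compact piece. For the tail, apply Watson's lemma by expanding $(1-\kappa)^{-1/2}$ as a power series in $\kappa$ and integrating termwise, obtaining $I_a(s)\sim 2^{a+1}\Gamma(a+1)s^{-(a+1)}$ to leading order with a computable remainder. A short calculation using $\Gamma((p+5)/2)=\tfrac{p+3}{2}\Gamma((p+3)/2)$ shows that the leading contribution to $\Psi_p(s)$ is proportional to $(5-p)/2$, so the inequality holds strictly on $[S_0,\infty)$ for an explicit $S_0$ as soon as $p\ge 6$. For the bounded interval $[0,S_0]$, I would use interval arithmetic: partition $[0,S_0]$ finely in $s$, compute validated enclosures of each $I_a$ on every subinterval (for instance, by Gauss--Jacobi quadrature with certified remainder, or by an interval-valued Taylor expansion of $e^{-\kappa s/2}$ in $s$ combined with the closed-form moments of the beta-type integrand), and check that the resulting enclosure of $\Psi_p(s)$ is strictly negative on each subinterval.

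The hard part will be the tightness at $p=7$. Because the Bayes estimator is not minimax for small $p$, one expects $\Psi_p$ to be only marginally negative somewhere for $p$ near the threshold, leaving essentially no slack; this forces a fine subdivision and sharp enclosures, and $S_0$ must be chosen so that the interval-arithmetic region and the Watson's-lemma region meet with a strictly positive margin on each side. A secondary issue is extending the verification from one dimension to all $p\ge 7$: either a monotonicity-in-$p$ property of $\Psi_p$ should be established to reduce the check to the critical case, or the computation must be repeated for each dimension, with care that the Watson's-lemma remainders can be handled uniformly in $p$.
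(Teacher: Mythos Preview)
Your SURE reduction is correct and equivalent to the paper's: the pointwise inequality $\Psi_p(s)\le 0$ is a reparametrization of the paper's condition $\Delta(w;1/2,1/2)\ge 0$, and your Watson's-lemma computation correctly recovers the leading coefficient $(5-p)/2$ at infinity.

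The genuine gap is the one you label ``secondary'': covering all $p\ge 7$. Your second alternative---repeating the verification for each dimension---is an infinite procedure and cannot be a proof; making the Watson remainders uniform in $p$ controls only the tail and still leaves infinitely many compact-interval checks. Your first alternative---monotonicity of $\Psi_p$ in $p$---is not obvious and you do not supply it; $\Psi_p$ mixes $I_{(p-1)/2},I_{(p+1)/2},I_{(p+3)/2}$ with the explicit factor $2p$ in a way that resists any one-line comparison between consecutive $p$. Without this step the proposal does not close.

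The paper handles exactly this point, but by a different decomposition. Instead of splitting $s$ into tail and compact regions, it rewrites $\Delta$ in terms of ratios of confluent hypergeometric functions (Lemma~\ref{lem:Delta}) and partitions $w$ into three sets $\mathcal{Q}_1,\mathcal{Q}_2,\mathcal{Q}_3$ according to the sign of $M(b-1,\cdot,w)$. On $\mathcal{Q}_1\cup\mathcal{Q}_2$ the inequality is immediate; on $\mathcal{Q}_3$ a uniform lower bound on the ratio $M(b-1,b+q+1,w)/M(b,b+q+1,w)$ (Lemma~\ref{lem:ratio.0}, obtained by truncating the series at degree four and solving a quartic minimization via Cardano's formula) yields the analytical sufficient condition of Theorem~\ref{thm:main_general}. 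That theorem disposes of \emph{all} $p\ge 11$ at once, leaving only $p\in\{7,8,9,10\}$ to interval arithmetic---and even there, the interval arithmetic is applied to explicit polynomials (finite series truncations of $M$), not to enclosures of integrals. Thus the hypergeometric representation and the ratio bound are precisely the analytical ingredients that reduce the problem to finitely many checks; your plan has no analogue of them.
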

In the proof of Theorem \ref{thm:main}, we employ the interval arithmetic \citep{Moore-2009}, which has been used in the field of verified numerical computation.
See Appendix~\ref{sec:interval} for a brief review of the interval arithmetic.
To our knowledge, the interval arithmetic has not been common in the community of statisticians.

The organization of the paper is as follows.
We start with a more general prior 
\begin{equation}\label{our.prior}
 \pi(\kappa)= \kappa^{a-1}(1-\kappa)^{b-1}\text{ with }0<b<1.
\end{equation}
Note, for $b\geq 1$, 
the minimaxity of the corresponding (generalized) Bayes estimators has been well investigated
by \cite{Berger-1976} and \cite{Faith-1978}.
\cite{Maruyama-1998} treated the case $0<b<1$ and showed that
the corresponding Bayes estimators with $(p+2a+2)/(3p/2+a)\leq b<1$ are minimax.
However, these results do not cover the case $a=b=1/2$ (half-Cauchy prior) for any $p\geq 3$.
In Section \ref{sec:Bayes_SURE}, we give the Bayes estimators under \eqref{our.prior}
and their Stein's unbiased risk estimates.
In Section \ref{sec:minimax.general}, we provide a sufficient condition for the Bayes estimators under \eqref{our.prior}
to be minimax, which will be stated as Theorem \ref{thm:main_general}.
Then, in Section \ref{subsec:proof.2}, we focus on the half-Cauchy prior, which is a special case of \eqref{our.prior} with $a=b=1/2$, and prove Theorem \ref{thm:main}. 
Whereas the minimaxity of the half-Cauchy prior for $p\geq 11$ directly follows from Theorem \ref{thm:main_general}, the case of $7\leq p\leq 10$ requires additional investigation using the interval arithmetic.
Many of the proofs of technical lemmas are given in Appendix.
The python code for the interval arithmetic proof is available at \url{https://github.com/takeru-matsuda/half_cauchy}.

\section{Bayes estimators and Stein's unbiased risk estimate}
\label{sec:Bayes_SURE}
By the identity
\begin{equation}
 \|y-\beta\|^2+\frac{\kappa}{1-\kappa}\|\beta\|^2
=\frac{1}{1-\kappa}\left\|\beta-(1-\kappa)y\right\|^2
+\kappa\|y\|^2,
\end{equation}
the marginal density under the model \eqref{hierarchical.1} is
\begin{align}
 m(y)&=\iint \frac{1}{(2\pi)^{p/2}}\exp\left(-\frac{\|y-\beta\|^2}{2}\right)\pi(\beta\mid\kappa)\pi(\kappa)\rd \beta \rd\kappa\\
&=\iint \frac{1}{(2\pi)^{p/2}}\exp\left(-\frac{\|y-\beta\|^2}{2}\right)
\frac{1}{(2\pi)^{p/2}}\left(\frac{\kappa}{1-\kappa}\right)^{p/2}\notag\\
&\quad\times\exp\left(-\frac{\kappa}{1-\kappa}\frac{\|\beta\|^2}{2}\right)
\kappa^{a-1}(1-\kappa)^{b-1}
\rd \beta\rd\kappa \notag\\
&=\frac{1}{(2\pi)^{p/2}}\int \kappa^{p/2+a-1}(1-\kappa)^{b-1}\exp\left(-\kappa\frac{\|y\|^2}{2}\right)
\rd\kappa. \label{my.1}
\end{align}
Note $\exp(-\kappa\|y\|^2/2)\leq 1$. Then, for 
\begin{equation}\label{a-b-hanni}
 p/2+a>0\text{ and }b>0,
\end{equation}
we have
\begin{equation}
 m(y)\leq \frac{1}{(2\pi)^{p/2}}
\int_0^1\kappa^{p/2+a-1}(1-\kappa)^{b-1}\rd\kappa=
\frac{\mathrm{B}(p/2+a,b)}{(2\pi)^{p/2}}<\infty.\label{marginal.0}
\end{equation}
By Tweedie's formula \citep{Efron-2011, Efron-2023-jjsd}, 
the Bayes estimator is
\begin{equation}\label{hat.beta}
 \hat{\beta}
=y+\nabla \log m(y) 
=\left(1-\frac {\int_0^1\kappa^{p/2+a}(1-\kappa)^{b-1}\exp(-\kappa\|y\|^2/2)\rd\kappa}
{\int_0^1\kappa^{p/2+a-1}(1-\kappa)^{b-1}\exp(-\kappa\|y\|^2/2)\rd\kappa}
\right)y.
\end{equation}
Since the marginal density $m(y)$ given by \eqref{my.1} is spherically symmetric, 
let $m_*(\|y\|^2) \coloneqq m(y)$.
Then, the quadratic risk of the Bayes estimator \eqref{hat.beta} is
\begin{equation}
 \begin{split}
&  E\bigl[\|\hat{\beta}-\beta\|^2\bigr] =E\biggl[\Bigl\|Y-2\frac{m'_*(\|Y\|^2)}{m_*(\|Y\|^2)}Y-\beta
\Bigr\|^2\biggr]\\
&=E\biggl[\|Y-\beta\|^2+4\left(\frac{m'_*(\|Y\|^2)}{m_*(\|Y\|^2)}\right)^2\|Y\|^2
-4\sum_{i=1}^p(Y_i-\beta_i)Y_i\frac{m'_*(\|Y\|^2)}{m_*(\|Y\|^2)}\biggr]\\
&=E\left[p+4\frac{m'_* (\|Y\|^2)}{m_* (\|Y\|^2)}
\Bigl(p-2\|Y\|^2\frac{m''_*(\|Y\|^2)}{-m'_*(\|Y\|^2)}
+\|Y\|^2\frac{-m'_*(\|Y\|^2)}{m_*(\|Y\|^2)}\Bigr)\right],
 \end{split}
\end{equation}
where the third equality follows from \cite{Stein-1974} identity.
Let
\begin{equation}\label{hat.R.0}
 \hat{R}(\|y\|^2)=p+4\frac{m'_* (\|y\|^2)}{m_* (\|y\|^2)}
\Bigl(p-2\|y\|^2\frac{m''_*(\|y\|^2)}{-m'_*(\|y\|^2)}
+\|y\|^2\frac{-m'_*(\|y\|^2)}{m_*(\|y\|^2)}\Bigr).
\end{equation}
Then we have $ E[\|\hat{\beta}-\beta\|^2] =E[\hat{R}(\|Y\|^2)]$ and
$ \hat{R}(\|y\|^2)$ is called the Stein's unbiased risk estimate of $E[\|\hat{\beta}-\beta\|^2] $.
Recall $m(y)$ is given by \eqref{my.1}.
Then 
\begin{equation}\label{hat.R}
\hat{R}(\|y\|^2)=
p-2\frac{ \int_{0}^{1}\kappa^{p/2+a}(1-\kappa)^{b-1}\exp(-\kappa \|y\|^2/2) \rd \kappa}
{\int_{0}^{1}\kappa^{p/2+a-1}(1-\kappa)^{b-1}\exp(-\kappa \|y\|^2/2) \rd \kappa}\Delta(\|y\|^2/2;a,b),
\end{equation}
where 
\begin{equation}\label{eq:Delta}
 \begin{split}
\Delta(w;a,b) &=p-2w
\frac{ \int_{0}^{1}\kappa^{p/2+a+1}(1-\kappa)^{b-1}\exp(-w\kappa ) \rd \kappa}
{ \int_{0}^{1}\kappa^{p/2+a}(1-\kappa)^{b-1}\exp(-w\kappa ) \rd \kappa}\\
&\quad
+w\frac{ \int_{0}^{1}\kappa^{p/2+a}(1-\kappa)^{b-1}\exp(-w\kappa ) \rd \kappa}
{\int_{0}^{1}\kappa^{p/2+a-1}(1-\kappa)^{b-1}\exp(-w\kappa )\rd \kappa}.
\end{split}
\end{equation}
In Lemma \ref{lem:Delta} below,  $ \Delta(w;a,b)$ is represented 
through the confluent hypergeometric function defined by
\begin{equation}
 M(b,c,w)=1+\sum_{i=1}^\infty \frac{b\cdots (b+i-1)}{c\cdots (c+i-1)}\frac{w^i}{i!}.
\end{equation}
\begin{lemma}\label{lem:Delta}
Let $0<b<1$ and $ p/2+a>0$. Then
\begin{equation}\label{DeltaDelta}
\begin{split}
 \Delta(w;a,b) &=\frac{p}{2}-a-2+2\Bigl(\frac{p}{2}+a+1\Bigr)\frac{M(b-1,p/2+a+b+1,w)}{M(b,p/2+a+b+1,w)} \\
&\quad - \Bigl(\frac{p}{2}+a\Bigr)\frac{M(b-1,p/2+a+b,w)}{M(b,p/2+a+b,w)}.
\end{split}
\end{equation}
\end{lemma}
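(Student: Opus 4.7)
The plan is to reduce $\Delta(w;a,b)$ to Kummer's confluent hypergeometric function through two standard tools: Kummer's integral representation together with his transformation $M(\alpha,\gamma,-w)=e^{-w}M(\gamma-\alpha,\gamma,w)$, and a contiguous relation that converts an $M(b,c+1,w)$ into a combination of $M(b,c,w)$ and $M(b-1,c,w)$.

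First I will evaluate the three integrals appearing in \eqref{eq:Delta}. For each exponent $s\in\{p/2+a-1,\ p/2+a,\ p/2+a+1\}$, Kummer's integral representation gives
\begin{equation}
\int_0^1 \kappa^{s}(1-\kappa)^{b-1}e^{-w\kappa}\rd\kappa
= \frac{\Gamma(s+1)\Gamma(b)}{\Gamma(s+1+b)}\,e^{-w}M(b,s+1+b,w),
\end{equation}
after applying $M(s+1,s+1+b,-w)=e^{-w}M(b,s+1+b,w)$. Taking the two ratios in \eqref{eq:Delta}, the $e^{-w}$ and $\Gamma(b)$ factors cancel, and the Gamma-function prefactors collapse to the rational factors $(p/2+a+1)/(p/2+a+b+1)$ and $(p/2+a)/(p/2+a+b)$, respectively. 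At this stage
\begin{equation}
\Delta(w;a,b)=p-2w\,\frac{p/2+a+1}{p/2+a+b+1}\frac{M(b,p/2+a+b+2,w)}{M(b,p/2+a+b+1,w)}
+w\,\frac{p/2+a}{p/2+a+b}\frac{M(b,p/2+a+b+1,w)}{M(b,p/2+a+b,w)}.
\end{equation}

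Next I will use the contiguous identity $w M(b,c+1,w)=c\bigl[M(b,c,w)-M(b-1,c,w)\bigr]$, which follows by comparing coefficients of $w^n$ on both sides (the factor $(b+n-1)-(b-1)=n$ produces the shift). Applied with $c=p/2+a+b$ and with $c=p/2+a+b+1$, this replaces each ratio $M(b,c+1,w)/M(b,c,w)$ by $(c/w)\bigl[1-M(b-1,c,w)/M(b,c,w)\bigr]$. The two factors of $w$ cancel the $w$'s out front, and the $c$'s cancel the fractional prefactors, leaving only constant and $M(b-1,\cdot)/M(b,\cdot)$ terms. Collecting the constants yields $p-2(p/2+a+1)+(p/2+a)=p/2-a-2$, and the $M$-ratios assemble into precisely the coefficients $2(p/2+a+1)$ and $-(p/2+a)$ in \eqref{DeltaDelta}.

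The routine obstacle is bookkeeping: one must keep track of three different second arguments of $M$ and make sure the contiguous relation is applied with the correct $c$ so that exactly the terms appearing in \eqref{DeltaDelta} survive; there is no genuinely hard step, since both the integral representation and the contiguous relation are classical. The hypotheses $0<b<1$ and $p/2+a>0$ in the lemma are exactly what is needed to ensure convergence of the Kummer integrals at both endpoints, so no extra analytic care is required.
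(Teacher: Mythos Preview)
Your proof is correct. Both your argument and the paper's arrive at the representation of each integral $\int_0^1\kappa^{q+j-1}(1-\kappa)^{b-1}e^{-w\kappa}\rd\kappa$ as $B(q+j,b)\,e^{-w}M(b,b+q+j,w)$ and then eliminate the factors of $w$ to obtain the $M(b-1,\cdot)/M(b,\cdot)$ ratios; the difference lies in the tools. The paper works from first principles: it expands $e^{w(1-\kappa)}$ as a power series and integrates term by term (in place of citing Kummer's integral representation and transformation), and it treats the numerators $w\int_0^1\kappa^{q+j}(1-\kappa)^{b-1}e^{w(1-\kappa)}\rd\kappa$ by an explicit integration by parts against $\frac{\rd}{\rd\kappa}\{-e^{w(1-\kappa)}+1\}$, followed by direct series manipulation, in place of your contiguous relation $wM(b,c+1,w)=c\bigl[M(b,c,w)-M(b-1,c,w)\bigr]$. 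Your route is shorter and more transparent once the named identities are granted; the paper's version is self-contained and requires no external special-function facts.
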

Proof of Lemma \ref{lem:Delta} is given in Appendix \ref{subsec:lem.Delta}.

\section{Minimaxity under the general priors}
\label{sec:minimax.general}
As in Section \ref{sec:Bayes_SURE}, the risk $ E[\|\hat{\beta}-\beta\|^2]$ 
is equal to $E[\hat{R}(\|Y\|^2)]$ where
\begin{equation}\label{hat.R.proofs}
\hat{R}(\|y\|^2)=
p-2\frac{ \int_{0}^{1}\kappa^{p/2+a}(1-\kappa)^{b-1}\exp(-\kappa \|y\|^2/2) \rd \kappa}
{\int_{0}^{1}\kappa^{p/2+a-1}(1-\kappa)^{b-1}\exp(-\kappa \|y\|^2/2) \rd \kappa}\Delta(\|y\|^2/2;a,b).
\end{equation}
In the above, $\Delta(w;a,b)$ is represented by \eqref{DeltaDelta} in Lemma \ref{lem:Delta}.
Then a sufficient condition for minimaxity, or equivalently
$E[\|\hat{\beta}-\beta\|^2]\leq p$, is given by 
\begin{equation}
 \Delta(w;a,b)\geq 0 \text{ for all }w\geq 0.
\end{equation}
Let 
\begin{align}\label{Psi.b.c}
 \Psi(b,q) =\frac{4}{3}\frac{b}{1-b}\frac{b+q+1}{b+q+2}
-2\psi\bigl\{\zeta(b,q)\bigr\}-\Bigl[\psi\bigl\{\zeta(b,q)\bigr\}\Bigr]^2,
\end{align}
where
\begin{equation}\label{tautau}
 \zeta(b,q)=\frac{2}{9}\frac{b^2}{(b+1)(b+2)}\frac{(b+q+3)(b+q+4)}{(b+q+2)^2},
\end{equation}
and
\begin{equation}\label{G}
 \psi(\zeta)=\zeta^{1/3}\left\{(1+\sqrt{1-\zeta})^{1/3}+(1-\sqrt{1-\zeta})^{1/3}\right\}.
\end{equation}
Then we have a following result.
\begin{theorem}\label{thm:main_general}
Assume $p\geq 3$,
\begin{align}
&\begin{cases}
\displaystyle -\frac{p}{2}<a\leq -\frac{3}{2}, & 0<b<1, \\[9pt]
\displaystyle-\frac{3}{2}<a<\frac{p}{2}-2, & \displaystyle\frac{8a+12}{2a+3p}\leq b<1,
\end{cases} \label{range.a}\\
& \text{ and } \ \Psi(b,p/2+a)\geq 0.\label{psi.positive}
\end{align}
Then the Bayes estimator under the prior \eqref{our.prior} is minimax.
\end{theorem}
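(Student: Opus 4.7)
The plan is to establish that $\Delta(w;a,b)\geq 0$ for every $w\geq 0$; by the unbiased-risk identity \eqref{hat.R.proofs}, this gives $\hat R(\|y\|^2)\leq p$ pointwise and hence the minimax bound $E[\|\hat\beta-\beta\|^2]\leq p$. Via Lemma~\ref{lem:Delta}, the content of this inequality lies in controlling the two ratios
\begin{equation*}
R_c(w)\coloneqq\frac{M(b-1,c,w)}{M(b,c,w)},\qquad c\in\{\,p/2+a+b,\;p/2+a+b+1\,\}.
\end{equation*}
A direct check from \eqref{DeltaDelta}, using $R_c(0)=1$ and the asymptotic $R_c(w)\sim (b-1)/w\to 0^{-}$ as $w\to\infty$, gives the boundary values $\Delta(0;a,b)=p$ and $\Delta(\infty;a,b)=p/2-a-2$, both nonnegative under \eqref{range.a}. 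So the real content of the theorem lies in bounding an interior minimum of $w\mapsto\Delta(w;a,b)$.

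Step one is to apply the contiguous relation
\begin{equation*}
M(b-1,c,w)=M(b,c,w)-\frac{w}{c}\,M(b,c+1,w),
\end{equation*}
which rewrites $R_c(w)=1-(w/c)S_c(w)$ with $S_c(w)\coloneqq M(b,c+1,w)/M(b,c,w)$. Since $0<b<c$, Euler's integral representation expresses $S_c(w)$ as $(c/(c-b))\,E_w[1-T]$, where $T$ follows the exponentially tilted $\mathrm{Beta}(b,c-b)$ density on $(0,1)$; this reveals $R_c$ as a functional of moments of a one-parameter exponential family and opens the door to monotonicity, convexity, and Cauchy--Schwarz-type inequalities for the combination in \eqref{DeltaDelta}.

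Step two, the decisive one, is to express the sharp lower bound of that combination as the real root of a depressed cubic. Formula~\eqref{G} is precisely Cardano's solution of $\psi^{3}-3\zeta\psi-2\zeta=0$, and the rational function $\zeta(b,q)$ in \eqref{tautau} has exactly the structure that one expects to arise when collecting the leading moments/Taylor coefficients of the relevant combination of $S_{c_{1}}$ and $S_{c_{2}}$. I therefore anticipate that, after algebraic simplification, the pointwise inequality $\Delta(w;a,b)\geq 0$ is implied by the quadratic-in-$\psi$ inequality
\begin{equation*}
\tfrac{4}{3}\,\tfrac{b}{1-b}\,\tfrac{b+q+1}{b+q+2}-2\psi\bigl(\zeta(b,q)\bigr)-\bigl[\psi\bigl(\zeta(b,q)\bigr)\bigr]^{2}\geq 0,
\end{equation*}
with $q=p/2+a$, which is exactly $\Psi(b,p/2+a)\geq 0$ from \eqref{Psi.b.c}, guaranteed by \eqref{psi.positive}. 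The range \eqref{range.a} is what ensures that the coefficients of $R_{c_{1}}$ and $R_{c_{2}}$ in \eqref{DeltaDelta} have the signs making this cubic reduction legitimate.

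The hard part will be executing step two rigorously: proving that the cubic equation controlling the worst case is genuinely $\psi^{3}-3\zeta\psi-2\zeta=0$ and not merely a looser surrogate, so that the resulting sufficient condition is sharp and lines up exactly with $\Psi(b,p/2+a)\geq 0$ rather than a strictly stronger inequality. This will likely demand careful bookkeeping of the low-order moments $E_w[(1-T)^{k}]$ for $k=1,2,3$ together with the specific algebraic identities among the coefficients in \eqref{DeltaDelta}. Once the cubic bound is in place, the remaining algebraic reduction that collapses the right-hand side to $\Psi(b,p/2+a)$ should be routine.
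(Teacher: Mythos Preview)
Your proposal is a plan rather than a proof: the ``hard part'' (step two) is explicitly left unexecuted, and without it nothing has been shown. More importantly, the structure you anticipate does not match the actual argument, and your reading of the roles of the two hypotheses is off.

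In the paper, the assumption $\Psi(b,q)\geq 0$ does \emph{not} directly control the full combination in \eqref{DeltaDelta}. Its sole role is to guarantee, via Lemma~\ref{lem:ratio.0}, the single-ratio lower bound
\[
\min_{w\geq 0}\frac{M(b-1,b+q+1,w)}{M(b,b+q+1,w)}\;\geq\;-\frac{1-b}{b+2}.
\]
The Cardano machinery lives entirely inside the proof of that lemma: one writes $M(b-1,c,w)+\delta\,M(b,c,w)$ as a power series, chooses $\delta=(1-b)/(b+2)$ so that the tail from degree~$5$ onward is termwise nonnegative, and is left with a quartic in $w$ whose minimum is computed by Cardano's formula; this is where $\zeta(b,q)$ and $\psi(\zeta)$ arise. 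There is no moment or Cauchy--Schwarz analysis of tilted Beta laws.

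The theorem itself is then proved by a sign-based case split. One partitions $[0,\infty)$ according to the signs of $M(b-1,b+q,w)$ and $M(b-1,b+q+1,w)$. Where at least one of these numerators is nonnegative, monotonicity of $M$ in its second argument already gives $\Delta\geq p/2-a-2>0$. On the remaining region both numerators are negative; there the last term of \eqref{DeltaDelta} is nonnegative and is simply dropped, the bound $-(1-b)/(b+2)$ from Lemma~\ref{lem:ratio.0} is inserted in the middle term, and one obtains
\[
\Delta(w;a,b)\;\geq\;\frac{b(3p+2a)-(8a+12)}{2(b+2)}.
\]
This is exactly where \eqref{range.a} enters: it is \emph{not} a condition on the signs of the coefficients $2(q+1)$ and $-q$ in \eqref{DeltaDelta} (those are fixed), but precisely the statement that the displayed quantity is nonnegative. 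Your proposal conflates the roles of \eqref{range.a} and \eqref{psi.positive}; until you separate them---$\Psi\geq 0$ to bound one ratio, \eqref{range.a} to make that bound sufficient---the argument cannot close.
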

\begin{proof}
Let $q=p/2+a$ in \eqref{DeltaDelta}. Then
\begin{equation}\label{DeltaDelta.q}
\begin{split}
& \Delta(w;a,b) \\ &=\frac{p}{2}-a-2+2(q+1)\frac{M(b-1,b+q+1,w)}{M(b,b+q+1,w)} - q\frac{M(b-1,b+q,w)}{M(b,b+q,w)}.
\end{split}
\end{equation}
Recall $ 0<b<1$. Then
\begin{equation}\label{ineq:basic}
 \begin{split}
  M(b-1,b+q+1,w)&\geq M(b-1,b+q,w) \\
\text{and}\quad M(b,b+q,w)&\geq M(b,b+q+1,w)\geq 0,
 \end{split}
\end{equation}
for all $w\geq 0$.
Let
\begin{align}
 \mathcal{Q}_1&=\{w:M(b-1,b+q,w) \geq 0 \}, \\
 \mathcal{Q}_2&=\{w:M(b-1,b+q+1,w) \geq 0 > M(b-1,b+q,w) \}, \\
 \mathcal{Q}_3&=\{w:M(b-1,b+q,w)<M(b-1,b+q+1,w) < 0 \},
\end{align}
where 
\begin{equation}
 \mathcal{Q}_1\cup\mathcal{Q}_2\cup\mathcal{Q}_3=\{w:w\geq 0\} \
\text{ and } \
\mathcal{Q}_1\cap\mathcal{Q}_2=\mathcal{Q}_1\cap\mathcal{Q}_3=\mathcal{Q}_2\cap\mathcal{Q}_3=\emptyset.
\end{equation}
By \eqref{ineq:basic}, for $ w \in \mathcal{Q}_1$, we have
\begin{gather}
M(b-1,b+q+1,w) \geq 0,\\
\frac{M(b-1,b+q+1,w)}{M(b,b+q+1,w)}\geq \frac{M(b-1,b+q,w)}{M(b,b+q,w)}\geq 0, 
\end{gather}
and hence
\begin{equation}\label{eq:case_i}
 \begin{split}
 \Delta(w;a,b) &\geq \frac{p}{2}-a-2 +\left\{2(q+1)-q\right\}\frac{M(b-1,b+q,w)}{M(b,b+q,w)} \\
&\geq p/2-a-2> 0,
\end{split}
\end{equation}
where the last inequality follows from \eqref{range.a}.
For $ w \in \mathcal{Q}_2$, 
it is clear that
\begin{equation}\label{eq:case_ii}
 \Delta(w;a,b) \geq p/2-a-2> 0,
\end{equation}
where the inequality follows from \eqref{range.a}.
By \eqref{eq:case_i} and \eqref{eq:case_ii}, the theorem follows
provided
\begin{equation}
 \min_{w\in\mathcal{Q}_3}\Delta(w;a,b) \geq 0.
\end{equation}
By the assumption \eqref{psi.positive}, it follows from Lemma \ref{lem:ratio.0} below that
\begin{equation}\label{Delta.case.iii_10}
\min_{w\geq 0} \frac{M(b-1,b+q+1,w)}{M(b,b+q+1,w)} \geq -\frac{1-b}{b+2}.
\end{equation}
Also note 
\begin{equation}\label{Delta.case.iii_0}
 -\frac{M(b-1,b+q,w)}{M(b,b+q,w)}\geq 0
\end{equation}
for $w\in \mathcal{Q}_3$. By \eqref{DeltaDelta}, \eqref{Delta.case.iii_10} 
and \eqref{Delta.case.iii_0}, we have
\begin{equation}
\begin{split}
 \min_{w\in\mathcal{Q}_3}\Delta(w;a,b)
&\geq \frac{p}{2}-a-2 + 2\bigl(\frac{p}{2}+a+1\bigr) \min_{w\in\mathcal{Q}_3}
\frac{M(b-1,b+q+1,w)}{M(b,b+q+1,w)}  \\
&\geq \frac{p}{2}-a-2- (p+2a+2)\frac{1-b}{b+2} \\
&=\frac{b(3p+2a)-(8a+12)}{2(b+2)},\label{Delta.case.iii_00}
\end{split} 
\end{equation}
which is nonnegative under \eqref{range.a}. This completes the proof.
\end{proof}
The following lemma is used in the proof above.
\begin{lemma}\label{lem:ratio.0}
Suppose $0<b<1$, $q>0$ and $\Psi(b,q)\geq 0$. Then
 \begin{equation}
\min_{w\geq 0}  \frac{M(b-1,b+q+1,w)}{M(b,b+q+1,w)}\geq -\frac{1-b}{b+2}.
 \end{equation}
\end{lemma}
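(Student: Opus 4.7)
The plan is to reduce the claim to bounding the maximum of a scalar function on $[0,\infty)$, and then to characterize that maximum via a first-order condition and a cubic algebraic identity.

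First I would apply the contiguous relation $M(a-1,c,w)=M(a,c,w)-(w/c)M(a,c+1,w)$ with $a=b$ and $c=b+q+1$ to write
\begin{equation*}
R(w):=\frac{M(b-1,b+q+1,w)}{M(b,b+q+1,w)}=1-\frac{w}{b+q+1}S(w),\qquad S(w):=\frac{M(b,b+q+2,w)}{M(b,b+q+1,w)}.
\end{equation*}
The desired inequality $R(w)\ge -(1-b)/(b+2)$ is then equivalent to $wS(w)\le 3(b+q+1)/(b+2)$ for all $w\ge 0$. Since $wS(w)=0$ at $w=0$ and, by the standard asymptotics $M(a,c,w)\sim\Gamma(c)\Gamma(a)^{-1}e^w w^{a-c}$, tends to $b+q+1<3(b+q+1)/(b+2)$ as $w\to\infty$, the supremum of $wS$ is attained at some interior point $w^\ast>0$.

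At $w^\ast$ the first-order condition $R'(w^\ast)=0$ together with the derivative rule $M'(a,c,w)=(a/c)M(a+1,c+1,w)$ yields $R(w^\ast)=((b-1)/b)\,\alpha(w^\ast)$, where $\alpha(w):=M(b,b+q+2,w)/M(b+1,b+q+2,w)$. Hence the lemma reduces to showing $\alpha(w^\ast)\le b/(b+2)$. Using the further contiguous relation $M(b,c,w)=M(b+1,c,w)-(w/c)M(b+1,c+1,w)$ and Kummer's differential equation $wM''+(c-w)M'-bM=0$ evaluated at $w^\ast$, I expect the several confluent-hypergeometric ratios arising from the critical-point equations to collapse to a single scalar $\psi$ obeying a cubic identity $\psi^3=\zeta(3\psi+2)$, in which a direct calculation matches the coefficient $\zeta$ to $\zeta(b,q)$ in \eqref{tautau}. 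The formula \eqref{G} for $\psi(\zeta)$ is precisely Cardano's real root of $\psi^3-3\zeta\psi-2\zeta=0$ on $\zeta\in(0,1)$, and the bound $\alpha(w^\ast)\le b/(b+2)$ then simplifies, after elementary algebra, to $\psi^2+2\psi\le \tfrac{4}{3}\tfrac{b}{1-b}\tfrac{b+q+1}{b+q+2}$, which is exactly the hypothesis $\Psi(b,q)\ge 0$ from \eqref{Psi.b.c}.

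The main obstacle is the algebra at the critical point: each application of a contiguous relation introduces an additional confluent-hypergeometric ratio, so one must combine them in just the right way for the system to close to a single cubic with the precise coefficient $\zeta(b,q)$. Once this cubic relation is established, the remaining steps of extracting the Cardano root $\psi(\zeta)$ and matching back to $\Psi(b,q)$ are mechanical.
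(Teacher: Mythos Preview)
Your opening steps are sound: the contiguous relation gives $R(w)=1-\tfrac{w}{b+q+1}S(w)$, the asymptotics force the minimum of $R$ to be interior, and at that point the identity $R(w^\ast)=\tfrac{b-1}{b}\,\alpha(w^\ast)$ with $\alpha=M(b,c+1,\cdot)/M(b+1,c+1,\cdot)$ is correct. The gap is exactly where you locate it, and it is a real one. Each further contiguous relation or use of Kummer's equation introduces a new ratio with shifted parameters; there is no algebraic mechanism forcing this chain to close at depth three, and in particular nothing in the exact first–order condition $R'(w^\ast)=0$ produces the specific rational coefficient $\zeta(b,q)$ of \eqref{tautau}, whose factors $(b+q+2),(b+q+3),(b+q+4)$ are a telltale sign of a finite truncation rather than of the transcendental extremum. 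Your expectation that the system ``collapses'' to $\psi^3=\zeta(b,q)(3\psi+2)$ is therefore unsupported, and I do not see how it can be made to work.

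The paper's route is different in kind and explains where $\zeta(b,q)$ actually originates. One expands $M(b-1,c,w)+\delta\,M(b,c,w)$, with $c=b+q+1$ and $\delta=(1-b)/(b+2)$, as a power series in $w$ and splits it as $f_4+g_4$, the degree-$4$ head plus the tail. The choice of $\delta$ kills the $w^3$ coefficient and makes every coefficient with $i\ge 5$ nonnegative, so $g_4\ge 0$ and it suffices to prove the explicit quartic $f_4(w;\delta)\ge 0$. After rescaling, $f_4$ has the shape $\text{const}-x-\gamma_2 x^2/2+\gamma_4 x^4/4!$; locating its minimum amounts to solving the depressed cubic $F'(x)=0$ by Cardano, and $\zeta(b,q)=8\gamma_2^3/(9\gamma_4)$ is nothing more than the discriminant parameter built from the truncated coefficients. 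The inequality $\min_w f_4\ge 0$ then rearranges to $\Psi(b,q)\ge 0$. Thus the cubic and the function $\psi$ in \eqref{G} are artifacts of the degree-$4$ polynomial approximation, not intrinsic features of the exact minimum of $R$; recovering them from your critical-point analysis would require replacing that analysis by the paper's truncation argument.
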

Proof of Lemma \ref{lem:ratio.0} is given in Appendix \ref{sec:lem:ratio}.
\section{Minimaxity under the half-Cauchy prior (Proof of Theorem \ref{thm:main})}
\label{subsec:proof.2}
[{\bfseries Case $\bm{p\geq 11}$}]{}
The result for this case is a corollary of Theorem \ref{thm:main_general}.
For $p\geq 11$, $a=b=1/2$ satisfy \eqref{range.a}, namely,
\begin{equation}
\frac{1}{2}\in\left(-p/2,p/2-2\right) \ \text{ and } \ 
\frac{1}{2}\in\left[\frac{8(1/2)+12}{2(1/2)+3p},1\right),
\end{equation}
since
\begin{equation}
\frac{1}{2}- \frac{8(1/2)+12}{2(1/2)+3p}-=\frac{1}{2}-\frac{16}{1+3p}=
  \frac{3p-31}{2(1+3p)}>0 \text{ for }p\geq 11.
\end{equation}
Further, by Parts \ref{lem:Psi.2} and \ref{lem:Psi.1} of Lemma \ref{lem:Psi} at the end of this section, we have
\begin{equation}
 \Psi(1/2,p/2+1/2)\geq \Psi(1/2,6)>0.2>0,
\end{equation}
for $p\geq 11$, which implies \eqref{psi.positive}.
Thus the minimaxity under $p\geq 11$ with $a=b=1/2$ follows.

\smallskip

[{\bfseries Case} $\bm{p=8,9,10}$]{}
As in the proof of Theorem \ref{thm:main_general}, it is clear that
\begin{equation}
\min_{w\in\mathcal{Q}_1\cup \mathcal{Q}_2} \Delta(w;1/2,1/2) >\frac{p-1-4}{2}>0,
\end{equation}
for $p=8,9,10$.
Further, by Part \ref{lem:ratio.2} of Lemma \ref{lem:ratio} at the end of this section,
\begin{equation}
\min_{w\geq 0} \frac{M(1/2-1,p/2+2,w)}{M(1/2,p/2+2,w)}\geq -\frac{1}{8}
\end{equation}
for $p=8,9,10$. 
Then, as in \eqref{Delta.case.iii_00},
\begin{equation}
\begin{split}
& \min_{w\in\mathcal{Q}_3}\Delta(w;1/2,1/2)\\
&\geq \frac{p}{2}-\frac{1}{2}-2+ \bigl(p+2\frac{1}{2}+2\bigr)
 \min_{w\geq 0}
\frac{M(1/2-1,p/2+2,w)}{M(1/2,p/2+2,w)}  \\
&\geq \frac{p-5}{2}-\frac{p+3}{8} =\frac{3p-23}{8},
\end{split} 
\end{equation}
which is nonnegative $p=8,9,10$. This completes the proof.

\medskip

[{\bfseries Case} $\bm{p=7}$]{}
As in the proof of Theorem \ref{thm:main_general}, it is clear that
\begin{equation}
 \begin{split}
\min_{w\in\mathcal{Q}_1\cup\mathcal{Q}_2} \Delta(w;1/2,1/2) &>\frac{p-1-4}{2}>0,
 \end{split}
\end{equation}
for $p=7$. By the verified computation
\begin{equation}
\frac{M(1/2-1,p/2+2,w)}{M(1/2,p/2+2,w)}\geq -\frac{1}{10}
\end{equation}
for $w\in (0,9.6)\cup(12.1,\infty)$ or equivalently $w\in[9.6,12.1]^\complement$. 
Then we have
\begin{equation}
\begin{split}
& \min_{w\in\mathcal{Q}_3\cap [9.6,12.1]^\complement}\Delta(w;1/2,1/2)\\
&\geq \frac{p}{2}-\frac{1}{2}-2+ \bigl(p+2\frac{1}{2}+2\bigr)
 \min_{w\in[9.6,12.1]^\complement}
\frac{M(1/2-1,p/2+2,w)}{M(1/2,p/2+2,w)}  \\
&\geq \frac{p-5}{2}-\frac{p+3}{10} =\frac{4p-28}{10}=\frac{2(p-7)}{5}=0,\label{case.p.7.1}
\end{split} 
\end{equation}
for $p=7$. By Part \ref{lem:ratio.2} of Lemma \ref{lem:ratio}, we have
\begin{equation}\label{eq:lem:ratio.2}
\min_{w\geq 0}\frac{M(1/2-1,p/2+2,w)}{M(1/2,p/2+2,w)}\geq -\frac{1}{8},
\end{equation}
for $p=7$.
Further, by Part \ref{lem:ratio.4} of Lemma \ref{lem:ratio}, we have
\begin{equation}\label{eq:lem:ratio.4}
-\frac{M(1/2-1,p/2+1,w)}{M(1/2,p/2+1,w)}\geq 0.08,
\end{equation}
for $w\in[9.6,12.1]$. 
Then, by \eqref{eq:lem:ratio.2} and \eqref{eq:lem:ratio.4}, we have
\begin{equation}
\begin{split}
& \min_{w\in\mathcal{Q}_3\cap[9.6,12.1]}\Delta(w;1/2,1/2)\\
&\geq \frac{p}{2}-\frac{1}{2}-2+ \bigl(p+2\frac{1}{2}+2\bigr)
 \min_{w\geq 0}
\frac{M(1/2-1,p/2+2,w)}{M(1/2,p/2+2,w)}\\ 
&\quad +\bigl(\frac{p}{2}+\frac{1}{2}\bigr)\min_{w\in[9.6,12.1]}
\left\{-\frac{M(1/2-1,p/2+1,w)}{M(1/2,p/2+1,w)}\right\}  \\
&\geq \frac{7-5}{2}-\frac{7+3}{8}+\frac{7+1}{2}\times 0.08 =0.07>0.\label{case.p.7.2}
\end{split} 
\end{equation}
Hence it follows from \eqref{case.p.7.1} and \eqref{case.p.7.2} that
\begin{equation}
\min_{w\in\mathcal{Q}_3}\Delta(w;1/2,1/2)\geq 0.
\end{equation}
This completes the proof.

\medskip

The following lemmas are used in the proof above.
\begin{lemma}\label{lem:Psi}
\begin{enumerate}
\item \label{lem:Psi.2} 
Suppose that $\Psi(b_*,q_*)\geq 0$ for fixed $b_*\in(0,1)$ and $q_*>0$.
Then $ \Psi(b_*,q)\geq 0$ follows for $q\geq q_*$.
\item \label{lem:Psi.1} $\Psi(1/2,6)>0.2$.
\end{enumerate} 
\end{lemma}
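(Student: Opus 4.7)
The plan is to handle the two parts independently.

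For Part 1, I would actually establish the stronger statement that $q\mapsto \Psi(b_*,q)$ is non-decreasing on $(0,\infty)$ for any fixed $b_*\in(0,1)$, from which the claim is immediate. In the decomposition of $\Psi$ from \eqref{Psi.b.c}, the first summand $\frac{4}{3}\frac{b_*}{1-b_*}\frac{b_*+q+1}{b_*+q+2} = \frac{4}{3}\frac{b_*}{1-b_*}\bigl(1 - \frac{1}{b_*+q+2}\bigr)$ is manifestly increasing in $q$. For the remaining piece $-2\psi(\zeta(b_*,q)) - \psi(\zeta(b_*,q))^2$, I would first observe that $\zeta(b_*,\cdot)$ is decreasing: writing $s=b_*+q+2$, the $q$-dependent factor in \eqref{tautau} is $(s+1)(s+2)/s^2 = 1 + 3/s + 2/s^2$, decreasing in $s>0$. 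Since $\psi\geq 0$ and $x\mapsto -2x-x^2$ is decreasing on $[0,\infty)$, it then suffices to show that $\psi$ is non-decreasing in $\zeta$.

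The key observation is that $\psi$ from \eqref{G} is the positive real root of a specific depressed cubic. Setting $t = (1+\sqrt{1-\zeta})^{1/3} + (1-\sqrt{1-\zeta})^{1/3}$ and expanding $t^3$ via the binomial, together with $(1+\sqrt{1-\zeta})(1-\sqrt{1-\zeta}) = \zeta$, yields $t^3 = 2 + 3\zeta^{1/3} t$; hence $\psi = \zeta^{1/3} t$ satisfies $\psi^3 - 3\zeta\psi - 2\zeta = 0$. Implicit differentiation gives $(3\psi^2 - 3\zeta)\psi'(\zeta) = 3\psi + 2$, and substituting $\zeta = \psi^3/(3\psi+2)$ yields $\psi^2 - \zeta = 2\psi^2(\psi+1)/(3\psi+2)$, strictly positive for $\psi>0$. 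Hence $\psi'(\zeta)>0$, completing Part 1.

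For Part 2, I would substitute $b=1/2$, $q=6$ directly. The first summand of $\Psi(1/2,6)$ equals the rational number $\frac{4}{3}\cdot 1\cdot \frac{15/2}{17/2} = 20/17$, while $\zeta(1/2,6) = \frac{2}{9}\cdot \frac{1}{15}\cdot \frac{399}{289} = \frac{266}{13005}\approx 0.02046$. Evaluating \eqref{G} at this $\zeta$ yields $\psi\approx 0.4035$, so $\Psi(1/2,6)\approx 1.1765 - 0.8070 - 0.1628 \approx 0.2067$, which exceeds $0.2$.

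The main obstacle is rigorously certifying the strict inequality $\Psi(1/2,6)>0.2$, since the margin above $0.2$ is only about $0.007$ and floating-point arithmetic alone is not a proof. I would resolve this via interval arithmetic, as already employed elsewhere in the paper: produce a certified upper enclosure on $\psi(\zeta(1/2,6))$ (and hence on $2\psi+\psi^2$) by propagating verified intervals through the closed-form expression \eqref{G}, then combine with the exact rational value $20/17$ of the first summand to obtain a verified lower bound on $\Psi(1/2,6)$ that strictly exceeds $0.2$.
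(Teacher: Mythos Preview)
Your proposal is correct and follows the same overall structure as the paper's proof: for Part~\ref{lem:Psi.2} both reduce to showing that the first summand of $\Psi$ is increasing in $q$, that $\zeta(b_*,\cdot)$ is decreasing in $q$, and that $\psi$ is increasing in $\zeta$; for Part~\ref{lem:Psi.1} both appeal to verified (interval-arithmetic) computation. The one substantive difference is in how the monotonicity of $\psi$ is established. The paper treats the two summands $\{\zeta(1\pm\sqrt{1-\zeta})\}^{1/3}$ in \eqref{G} separately, observing that the ``$-$'' term is obviously increasing and checking by direct differentiation that $\zeta(1+\sqrt{1-\zeta})$ has nonnegative derivative on the relevant range $\zeta\in(0,1/9)$. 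You instead derive the cubic relation $\psi^3=3\zeta\psi+2\zeta$ (the same identity the paper obtains in the proof of Lemma~\ref{thm:Cardano}) and use implicit differentiation, which yields $\psi'(\zeta)>0$ for all $\zeta\in(0,1)$ without needing the bound $\zeta<1/9$. Your route is a little cleaner and more self-contained; the paper's is more elementary but relies on the a~priori range of $\zeta$. Either way the conclusion is the same.
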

\begin{proof}
 See Appendix \ref{sec:lem:Psi}.
\end{proof}
\begin{lemma}\label{lem:ratio}
\begin{enumerate}
\item\label{lem:ratio.2}For $p=7,8,9,10$,
\begin{equation}
 \min_{w\geq 0} \frac{M(1/2-1,p/2+2,w)}{M(1/2,p/2+2,w)}\geq -\frac{1}{8}.
\end{equation}
\item\label{lem:ratio.3}For $p=7$,
\begin{equation}
 \frac{M(1/2-1,p/2+2,w)}{M(1/2,p/2+2,w)}\geq -\frac{1}{10},\text{ for }w\in(0,9.6)\cup(12.1,\infty).
\end{equation}
 \item\label{lem:ratio.4}For $p=7$,
\begin{equation}
-\frac{M(1/2-1,p/2+1,w)}{M(1/2,p/2+1,w)}\geq 0.08,\text{ for }w\in[9.6,12.1].
\end{equation}
\end{enumerate}
\end{lemma}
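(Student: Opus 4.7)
The plan is to verify all three bounds via a symbolic rewriting of the ratio followed by interval-arithmetic evaluation of truncated Kummer series on a bounded window, supplemented by an asymptotic argument for the tail. The starting point is the contiguous relation $M(1/2,c,w)-M(-1/2,c,w)=(w/c)\,M(1/2,c+1,w)$, which recasts
$$ R_c(w) := \frac{M(-1/2,c,w)}{M(1/2,c,w)} = 1 - \frac{w}{c}\,\rho_c(w), \qquad \rho_c(w) := \frac{M(1/2,c+1,w)}{M(1/2,c,w)} > 0. $$
Both $M(1/2,c,w)$ and $M(1/2,c+1,w)$ are Kummer series with \emph{strictly positive} coefficients, so rigorous enclosures are obtained by summing $N$ terms and bounding the tail by a term-ratio majorant times $e^{w}$; the sign cancellation present in $M(-1/2,c,w)$ is thereby isolated into the scalar factor $1-(w/c)\rho_c(w)$, with no further cancellation inside the series.

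In this formulation, the three claims become $(w/c)\rho_c(w)\leq 9/8$ on $[0,\infty)$ with $c=p/2+2$, $p\in\{7,8,9,10\}$; $(w/c)\rho_c(w)\leq 11/10$ on $[0,9.6)\cup(12.1,\infty)$ with $c=p/2+2$, $p=7$; and $(w/c)\rho_c(w)\geq 1.08$ on $[9.6,12.1]$ with $c=p/2+1$, $p=7$. I would fix a cut-off $W$ (say $W=50$), partition the bounded window $[0,W]$ into small panels, enclose $\rho_c$ on each panel via the truncation-plus-remainder scheme above, bisect adaptively when an enclosure is too loose, and check the target inequality on every panel. This is precisely the interval-arithmetic step announced in the introduction.

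For the tail $w\geq W$, I would invoke the standard Poincar\'e asymptotic $M(1/2,c,w) = [\Gamma(c)/\Gamma(1/2)]\,e^{w} w^{1/2-c}\,[1+(2c-1)/(4w)+O(w^{-2})]$ with an explicit Olver-type remainder bound, from which $(w/c)\rho_c(w) = 1 + 1/(2w) + O(w^{-2})$. This is comfortably below $9/8$ and below $11/10$ once $W$ is moderately large, so part (i) and the outer tail of part (ii) are disposed of analytically. The main obstacle will be parts (ii) and (iii) jointly: together they form a tight two-sided sandwich of $R_{p/2+1}$ and $R_{p/2+2}$ near the interval $[9.6,12.1]$ where $R$ is close to its minimum, so the enclosures must resolve gaps of order $10^{-2}$. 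The panel width and truncation order $N$ must therefore be tuned carefully, and the slack between the choice $c=p/2+1$ in (iii) versus $c=p/2+2$ in (ii) is what makes the certificate feasible; this is what the accompanying Python code implements.
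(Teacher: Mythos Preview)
Your approach is sound and will work, but it differs from the paper's in a structurally interesting way. The paper does \emph{not} use the contiguous relation to pass to a ratio of positive-coefficient series; instead it observes that the single series
\[
M(b-1,b+q+1,w)+\delta\,M(b,b+q+1,w)=\sum_{i\ge 0}\bigl[(b-1)+\delta(b+i-1)\bigr]\frac{(b)_{i-1}}{(b+q+1)_i}\frac{w^i}{i!}
\]
has coefficients that change sign exactly once in $i$, and that for $\delta\in\bigl(\tfrac{1-b}{b+N-1},\tfrac{1-b}{b}\bigr)$ the coefficients are nonnegative for all $i>N$. Hence the tail $g_N(w;\delta)\ge 0$ for every $w\ge 0$, and the entire inequality reduces to positivity of the \emph{polynomial} $f_N(w;\delta)$ of degree $N$ (with $N=8$ for part~1, $N=20$ for part~2). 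A separate lemma shows this polynomial has a unique interior minimum, so the interval-arithmetic certificate is just a root isolation plus one evaluation, with no asymptotic tail analysis required. For part~3 the paper bounds numerator and denominator separately by degree-$L$ polynomials, using an $e^{w_u}$ majorant for the denominator tail on the bounded window $[9.6,12.1]$.

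The trade-off: your contiguous-relation rewrite is cleaner and treats all three parts uniformly, but it forces you to handle $w\to\infty$ by a separate Poincar\'e-asymptotic argument with explicit remainder bounds---an extra ingredient the paper avoids entirely by its choice of $\delta$ and $N$. The paper's device is sharper in that it converts the infinite-series problem into a pure polynomial problem globally in $w$, at the cost of treating part~3 by a different (and somewhat ad hoc) truncation. Both routes land on an interval-arithmetic verification of comparable difficulty.
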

\begin{proof}
 See Appendix \ref{sec:lem:ratio}.
\end{proof}
The proofs of Part \ref{lem:Psi.1} of Lemma \ref{lem:Psi}
and Parts \ref{lem:ratio.2}--\ref{lem:ratio.4} of Lemma \ref{lem:ratio} 
utilize the interval arithmetic \citep{Moore-2009}, which has been the main tool of verified computation in the
area of computer science. See Appendix~\ref{sec:interval}.

\appendix
\label{sec:appendx}
\section{Proof of Lemma \ref{lem:Delta}}
\label{subsec:lem.Delta}
Let $q=p/2+a$. Then the function $\Delta(w;a,b)$ given by \eqref{eq:Delta}
is re-expressed as
\begin{equation}\label{eq:Delta.1}
 \begin{split}
\Delta(w;a,b) &=p-2w
\frac{ \int_{0}^{1}\kappa^{q+1}(1-\kappa)^{b-1}\exp(w\{1-\kappa\}) \rd \kappa}
{ \int_{0}^{1}\kappa^{q}(1-\kappa)^{b-1}\exp(w\{1-\kappa\} ) \rd \kappa}\\
&\quad
+w\frac{ \int_{0}^{1}\kappa^{q}(1-\kappa)^{b-1}\exp(w\{1-\kappa\}) \rd \kappa}
{\int_{0}^{1}\kappa^{q-1}(1-\kappa)^{b-1}\exp(w\{1-\kappa\}) \rd \kappa}.
\end{split}
\end{equation}
In the denominator of the second and third terms of the right-hand side of \eqref{eq:Delta.1}, 
 we have
\begin{equation}
 \begin{split}
&\int_{0}^{1}\kappa^{q+j-1}(1-\kappa)^{b-1}\exp(w\{1-\kappa\})\rd \kappa 
=\sum_{i=0}^\infty \frac{w^i}{i!}\int_{0}^{1}\kappa^{q+j-1}(1-\kappa)^{b+i-1}\rd \kappa \\
&=\sum_{i=0}^\infty \frac{w^i}{i!}B(q+j,b+i) 
=B(q+j,b) \sum_{i=0}^\infty \frac{w^i}{i!}\frac{B(q+j,b+i)}{B(q+j,b)} \\
&=B(q+j,b) M(b,b+q+j,w),\label{eq:confluent.hypergeo.1}
\end{split}
\end{equation}
for $j=0,1$. 
For the numerator,  
the second and third terms of the right-hand side of \eqref{eq:Delta.1}, 
note
\begin{equation}\label{key.derivative}
\frac{\rd }{\rd \kappa}\left\{-\exp(w\{1-\kappa\})+1 \right\}=w \exp(w\{1-\kappa\})
\end{equation}
and
\begin{equation}
 \left[\kappa^{q+j}(1-\kappa)^{b-1}\left\{-\exp(w\{1-\kappa\})+1 \right\}\right]_0^1=0,
\end{equation}
for $0<b<1$, $q>0$ and $j=0,1$. Then
an integration by parts gives
\begin{equation}\label{int.part.1}
 \begin{split}
& w \int_{0}^{1}\kappa^{q+j}(1-\kappa)^{b-1}\exp(w\{1-\kappa\}) \rd \kappa\\
&=(q+j)\int_{0}^{1}\kappa^{q+j-1}(1-\kappa)^{b-1}\left\{\exp(w\{1-\kappa\})-1\right\} \rd \kappa \\
&\quad -(b-1)\int_{0}^{1}\kappa^{q+j}(1-\kappa)^{b-2}\left\{\exp(w\{1-\kappa\})-1\right\} \rd \kappa\\
&=(q+j)B(q+j,b) \{M(b,b+q+j,w)-1\} \\
&\quad -(b-1)\int_{0}^{1}\kappa^{q+j}(1-\kappa)^{b-2}\left\{\exp(w\{1-\kappa\})-1\right\} \rd \kappa,
\end{split}
\end{equation}
where the last equality follows from \eqref{eq:confluent.hypergeo.1}.
Further we have
 \begin{align}
&\int_{0}^{1}\kappa^{q+j}(1-\kappa)^{b-2}\{\exp(w\{1-\kappa\})-1\}\rd \kappa \label{int.part.2}\\
&=\sum_{i=1}^\infty \frac{w^i}{i!}\int_{0}^{1}\kappa^{q+j}(1-\kappa)^{b+i-2}\rd \kappa \\
&=\sum_{i=1}^\infty \frac{w^i}{i!}B(q+1+j, b+i-1) \\
&=\frac{(q+j)B(q+j,b)}{b+q+j}
\left(w+\sum_{i=2}^\infty \frac{b\cdots (b+i-2)}{(b+q+j+1)\cdots (b+q+i+j-1)}\frac{w^i}{i!}\right) \\
&=\frac{(q+j)B(q+j,b)}{b-1}
\sum_{i=1}^\infty \frac{(b-1)\cdots (b+i-2)}{(b+q+j)\cdots (b+q+i+j-1)}\frac{w^i}{i!}\\
  &=\frac{(q+j)B(q+j,b)}{b-1}\left\{M(b-1,b+q+j,w)-1\right\}.
\end{align}
By \eqref{int.part.1} and \eqref{int.part.2}, we have
\begin{equation}\label{int.part.3}
 \begin{split}
  & w \int_{0}^{1}\kappa^{q+j}(1-\kappa)^{b-1}\exp(w\{1-\kappa\}) \rd \kappa\\
&=(q+j)B(q+j,b)\left\{M(b,b+q+j,w)-M(b-1,b+q+j,w)\right\}.
 \end{split}
\end{equation}
By \eqref{eq:Delta.1}, \eqref{eq:confluent.hypergeo.1} and \eqref{int.part.3},
we have
\begin{equation}
 \begin{split}
&\Delta(w;a,b)\label{Delta.general}\\
&=p-q-2+2(q+1)\frac{M(b-1,b+q+1,w)}{M(b,b+q+1,w)} 
- q\frac{M(b-1,b+q,w)}{M(b,b+q,w)} \\
&=\frac{p}{2}-a-2+2(q+1)\frac{M(b-1,b+q+1,w)}{M(b,b+q+1,w)} 
- q\frac{M(b-1,b+q,w)}{M(b,b+q,w)}.
\end{split}
\end{equation}

\section{Proof of Lemmas \ref{lem:ratio.0} and \ref{lem:ratio}}
\label{sec:lem:ratio}
Lemma \ref{lem:ratio.0} and 
Parts \ref{lem:ratio.2} and \ref{lem:ratio.3} of Lemma \ref{lem:ratio} 
are proved through the following expression,
\begin{equation}
 M(b-1,b+q+1,w)+\delta M(b,b+q+1,w)=f_N(w;\delta)+g_N(w;\delta)
\end{equation}
where
\begin{equation}
\begin{split}
f_N(w;\delta)&=(1+\delta) 
+\frac{b-1+\delta b}{b+q+1}w \\ &\quad +
\sum_{i=2}^N \{(b-1+\delta(b+i-1))\}\frac{b\cdots(b+i-2)}{(b+q+1)\cdots (b+q+i)}\frac{w^i}{i!} \\
g_N(w;\delta)&=\sum_{i=N+1}^\infty \{(b-1+\delta(b+i-1))\}\frac{b\cdots(b+i-2)}{(b+q+1)\cdots (b+q+i)}\frac{w^i}{i!} .
\end{split} 
\end{equation}
In the following proofs, we choose $\delta$ such that
\begin{equation}
\frac{1-b}{b+N-1}< \delta <\frac{1-b}{b}.
\end{equation}
Since we have
\begin{equation}
b-1+\delta(b+i-1)>0 \text{ for }i\geq N+1\text{ and }\delta\in\left(\frac{1-b}{b+N-1},\frac{1-b}{b}\right),
\end{equation}
$ g_N(w;\delta)\geq 0$ for all $w\geq 0$ follows.
If $f_N(w;\delta)\geq 0$ is also satisfied, we can conclude that
\begin{equation}
 \frac{M(b-1,b+q+1,w)}{M(b,b+q+1,w)}\geq -\delta .
\end{equation}
In the proofs, we will focus on the sufficient condition for $f_N(w;\delta)\geq 0$.
Lemma \ref{lem:unique.minimum} below guarantees that $f_N(w;\delta)$ takes
a unique minimum value on $(0,\infty)$.

\medskip
[Lemma \ref{lem:ratio.0}] 
Let
\begin{equation}
N=4\ \text{ and } \ \delta=\frac{1-b}{b+2}.
\end{equation}
For $f_4(w;\delta)$, we have
\begin{equation}
\begin{split}
& 1+\delta=\frac{3}{1-b}\delta,\quad b-1+\delta b=-2\delta,\quad 
b\{b-1+\delta (b+1)\}=-b\delta, \\
&  b(b+1)(b+2)\{(b-1)+\delta (b+3)\}=b(b+1)(b+2) \delta,
\end{split} 
\end{equation}
and hence
\begin{equation}\label{fgamma3}
\begin{split}
 \frac{f_4(w;\delta)}{\delta}-\frac{3}{1-b}
&=-\frac{2}{b+q+1}w-\frac{b}{(b+q+1)(b+q+2)}\frac{w^2}{2}\\
&\quad + \frac{b(b+1)(b+2)}{(b+q+1)(b+q+2)(b+q+3)(b+q+4)}\frac{w^4}{4!} \\
&=\frac{2}{b+q+1}\left(-w- \frac{b}{2(b+q+2)}
\frac{w^2}{2}\right. \\
&\left. \quad +
 \frac{b(b+1)(b+2)}{2(b+q+2)(b+q+3)(b+q+4)}
\frac{w^4}{4!}
\right).
\end{split} 
\end{equation}
For $b\in(0,1)$ and $q>0$, $\zeta(b,q)$ defined in \eqref{tautau} is bounded as
\begin{equation}
\begin{split}
 \zeta(b,q)&=\frac{2}{9}\frac{b^2}{(b+1)(b+2)}\frac{(b+q+3)(b+q+4)}{(b+q+2)^2}\\
&<\frac{2}{9}\frac{1^2}{(1+1)(1+2)}\frac{(0+0+3)(0+0+4)}{(0+0+2)^2}=\frac{1}{9}.
\end{split} 
\end{equation}
Then Lemma \ref{thm:Cardano} below gives 
\begin{equation}
 \min_{w\geq 0}\frac{f_4(w;\delta)}{\delta}-\frac{3}{1-b}
=  -
\frac{9(b+q+2)}{4b(b+q+1)}
\Bigl(2\psi(\zeta(b,q))+\{\psi(\zeta(b,q))\}^2\Bigr),
\end{equation}
which completes the proof.

\medskip
[Part \ref{lem:ratio.2} of Lemma \ref{lem:ratio}]  
Let $N=8$ and $\delta=1/8$. Recall $r=p/2+a+b=p/2+1$ in this case.
Then 
\begin{equation}
\begin{split}
& f_8(w;\delta) \\ &=(1+\delta)+\frac{b-1+\delta b}{r+1}w
+\sum_{i=2}^8
\{(b-1)+\delta (b+i-1)\}\frac{b\dots(b+i-2)}{(r+1)\dots(r+i)}\frac{w^i}{i!}\\
&=\frac{9}{8}-\frac{7}{8(p+4)}w
+\sum_{i=2}^8\frac{-7+2(i-1)}{16}\frac{(1/2)\dots(1/2+i-2)}{(p/2+2)\dots(p/2+1+i)}\frac{w^i}{i!}.
\end{split}
\end{equation}
The positivity of $f_8(w;\delta)$ for $w\geq 0$ with $p=7,8,9,10$ 
follows from the verified computation.

\medskip
[Part \ref{lem:ratio.3} of Lemma \ref{lem:ratio}]  
Let $p=7$, $N=20$ and $\delta=1/10$. Recall $r=p/2+a+b=p/2+1=9/2$ in this case.
Then 
\begin{equation}
\begin{split}
& f_{20}(w;\delta) \\ &=(1+\delta)+\frac{b-1+\delta b}{r+1}w
+\sum_{i=2}^{20}
\{(b-1)+\delta (b+i-1)\}\frac{b\dots(b+i-2)}{(r+1)\dots(r+i)}\frac{w^i}{i!}\\
&=\frac{11}{10}-\frac{9}{110}w
+\sum_{i=2}^{20}\frac{-9+2(i-1)}{20}
\frac{(1/2)\dots(1/2+i-2)}{(11/2)\dots(9/2+i)}\frac{w^i}{i!}.
\end{split}
\end{equation}
The positivity of $f_{20}(w;\delta)$ for 
\begin{equation}
 w\in(0,9.6)\cup(12.1,\infty)
\end{equation}
with $p=7$ 
follows from the verified computation.

\medskip
[Part \ref{lem:ratio.4} of Lemma \ref{lem:ratio}]  
Let $L$ be an integer strictly greater than $1$.
For the numerator, we have
\begin{equation}\label{numnum}
 \begin{split}
-M(-1/2,p/2+1,w)&=-1-\sum_{i=1}^\infty
\frac{(-1/2)\dots(-3/2+i)}{(p/2+1)\dots(p/2+i)}\frac{w^i}{i!} \\
&\geq -1-\sum_{i=1}^L
\frac{(-1/2)\dots(-3/2+i)}{(p/2+1)\dots(p/2+i)}\frac{w^i}{i!}, 
\end{split}
\end{equation}
which is an $L$-th polynomial with respect to $w$.
Further, for the denominator, we have 
\begin{equation}\label{denden}
 \begin{split}
&  \sum_{i=L +1}^\infty
\frac{(1/2)\dots(1/2+i-1)}{(p/2+1)\dots(p/2+i)}\frac{w^i}{i!}
\leq \frac{(1/2)\dots(1/2+L)}{(p/2+1)\dots(p/2+L+1)}\sum_{i=L+1}^\infty
\frac{w^i}{i!}\\ 
&=\frac{(1/2)\dots(1/2+L)}{(p/2+1)\dots(p/2+L+1)}
\left(\exp(w)-1- \sum_{i=1}^L
\frac{w^i}{i!}\right).
 \end{split}
\end{equation}
Hence for $w\leq w_u$, we have
\begin{equation}\label{denden.1}
 \begin{split}
&M(1/2,p/2+1,w)\\
&=1+\left\{\sum_{i=1}^L+\sum_{i=L+1}^\infty\right\}
\frac{(1/2)\dots(1/2+i-1)}{(p/2+1)\dots(p/2+i)}\frac{w^i}{i!}
\\
&\leq 1+
\frac{(1/2)\dots(1/2+L)}{(p/2+1)\dots(p/2+L+1)}
\left(\exp(w_u)-1\right) \\ &\quad +\sum_{i=1}^L
\left(
\frac{(1/2)\dots(1/2+i-1)}{(p/2+1)\dots(p/2+i)}
-\frac{(1/2)\dots(1/2+L)}{(p/2+1)\dots(p/2+L+1)}
\right)
\frac{w^i}{i!},
\end{split}
\end{equation}
which is an $L$-th polynomial with respect to $w$.
From the verified computation with \eqref{numnum}, \eqref{denden}, \eqref{denden.1}, $w_u=12.1$, $p=7$ and $L=20$,
we have
\begin{equation}
-\frac{M(1/2-1,p/2+1,w)}{M(1/2,p/2+1,w)}\geq 0.08\text{ for }w\in[9.6,12.1].
\end{equation}

\section{Preliminary results for the proof of Lemmas \ref{lem:ratio.0} and \ref{lem:ratio}}
\begin{lemma}\label{lem:unique.minimum}
 Suppose $\ell,m\in\mathbb{N}$, $m\geq 2$, $1\leq \ell\leq m-1$.
Let 
\begin{equation}
 f(x)=-\sum_{i=1}^\ell \alpha_i x^i+\sum_{i=\ell+1}^m \alpha_i x^i,
\end{equation}
where
\begin{equation}
\begin{cases}
\alpha_i\geq 0 \text{ for } 1\leq i\leq \ell\text{ with }\alpha_1>0 \text{ and } \alpha_\ell>0, \ \\
\alpha_i\geq 0 \text{ for } \ell+1\leq i\leq m\text{ with }\alpha_m>0.
\end{cases} 
\end{equation}
Then $f(x)$ has a unique extreme minimum value on $(0,\infty)$.
\end{lemma}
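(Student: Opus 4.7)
The plan is to exhibit a unique positive zero of $f'$ via Descartes' rule of signs and then identify that zero as a local minimum through the boundary behavior of $f'$. Since $f$ is a polynomial on the open half-line, any interior extremum must be a zero of $f'$, so controlling the number of positive roots of $f'$ controls the number of extrema.

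Differentiating term-by-term gives
$$f'(x) = -\sum_{i=1}^{\ell} i\alpha_i\, x^{i-1} + \sum_{i=\ell+1}^{m} i\alpha_i\, x^{i-1},$$
a polynomial of degree $m-1$ whose coefficient at $x^{i-1}$ is $-i\alpha_i \le 0$ for $1 \le i \le \ell$ and $+i\alpha_i \ge 0$ for $\ell+1 \le i \le m$. The hypotheses $\alpha_1 > 0$ and $\alpha_\ell > 0$ guarantee that the constant term $-\alpha_1$ and the coefficient $-\ell\alpha_\ell$ at $x^{\ell-1}$ are both strictly negative, while $\alpha_m > 0$ ensures that the leading coefficient $m\alpha_m$ is strictly positive. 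Reading the non-zero coefficients in order of increasing degree, and ignoring any zero coefficients in between (as Descartes' rule allows), one therefore sees a non-empty block of strict negatives followed by a non-empty block of strict positives; the sign sequence has exactly one change.

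Descartes' rule of signs then forces $f'$ to have exactly one positive real root $x_\ast$, counted with multiplicity, so $x_\ast$ is a simple root. Since $f'(0^+) = -\alpha_1 < 0$ and $f'(x) \to +\infty$ as $x \to \infty$, the derivative passes from negative to positive across $x_\ast$, making it a local minimum of $f$ and the only critical point on $(0,\infty)$. In particular $f$ is strictly decreasing on $(0, x_\ast)$ and strictly increasing on $(x_\ast, \infty)$, so $x_\ast$ is the unique extreme minimum on the open half-line.

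I anticipate no serious obstacle. The only subtle point is verifying that the three strict inequalities $\alpha_1 > 0$, $\alpha_\ell > 0$, $\alpha_m > 0$ each enter exactly where needed: the first two to certify that the negative block contains at least one strictly negative entry even in the degenerate case $\ell = 1$, and the last to certify that the positive block contains at least one strictly positive entry. Any weakening of these strict inequalities could either eliminate the single sign change (giving no positive root) or create an even number of sign changes (leaving room for multiple minima), so they are essential to the Descartes-rule bookkeeping.
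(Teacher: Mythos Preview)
Your proof is correct and follows a genuinely different route from the paper's. The paper proceeds analytically: after locating the first zero $x_1$ of $f'$ via the intermediate value theorem, it splits into cases on $\ell$. For $\ell=1$ the second derivative is manifestly nonnegative, forcing $f'>0$ on $(x_1,\infty)$. For $\ell\geq 2$ the paper derives the identity
\[
f''(x)=\frac{\ell-1}{x}\bigl(f'(x)+\alpha_1\bigr)+\sum_{i=2}^m i\,|\ell-i|\,\alpha_i\, x^{i-2}
\]
and argues by contradiction: assuming a second zero $x_3>x_1$ of $f'$, it evaluates $\int_{x_1}^{x_3} f''/(f'+\alpha_1)\,\rd x$ two ways and obtains $0>0$. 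Your Descartes'-rule argument handles both cases in a single stroke by reading off one sign change in $f'$, which is shorter and more transparent. The paper's approach, while longer, is fully self-contained (no appeal to an external algebraic theorem) and exposes the structure of $f''$ explicitly, which could matter if one later needed quantitative control on the minimum rather than mere uniqueness. Note also that your argument actually uses only $\alpha_1>0$ and $\alpha_m>0$; the hypothesis $\alpha_\ell>0$ is not needed for the sign-change count, though it does no harm.
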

\begin{proof}
We have
\begin{equation}
 f'(x)=-\sum_{i=1}^\ell i \alpha_i x^{i-1}+\sum_{i=\ell+1}^m i\alpha_i x^{i-1}
\end{equation}
and
\begin{equation}\label{eq:g''_0}
 f''(x)=
\begin{cases}
\displaystyle -\sum_{i=2}^\ell i(i-1) \alpha_i x^{i-1}+\sum_{i=\ell+1}^m i(i-1)\alpha_i x^{i-2} & \text{ for }\ell\geq 2, \\
\displaystyle \hspace{9em} \sum_{i=\ell+1}^m i(i-1)\alpha_i x^{i-2} & \text{ for }\ell =1.
\end{cases}
\end{equation}
For $f''(x)$ with $\ell\geq 2$, we have
\begin{equation}
 \sum_{i=2}^\ell i(i-1) \alpha_i x^{i-2}
=\frac{\ell-1}{x}\sum_{i=2}^\ell i \alpha_i x^{i-1}-\sum_{i=2}^\ell i(\ell-i) \alpha_i x^{i-2}
\end{equation} 
and
\begin{equation}
 \sum_{i=\ell+1}^m i(i-1)\alpha_i x^{i-2}
=\frac{\ell-1}{x}\sum_{i=\ell+1}^m i \alpha_i x^{i-1}+\sum_{i=\ell+1}^m i(i-\ell)\alpha_i x^{i-2}.
\end{equation}
Then, $f''(x)$ for $\ell\geq 2$ is re-expressed as
\begin{equation}\label{eq:g''}
 f''(x)=\frac{\ell-1}{x}\left(f'(x)+\alpha_1\right)+\sum_{i=2}^m i|\ell-i| \alpha_i x^{i-2}.
\end{equation}
Note $f'(0)=-\alpha_1<0$ and $\lim_{x\to\infty}f'(x)=+\infty$.
By the intermediate value theorem, there exists $x_1$ such that
\begin{equation}
 f'(x)<0\text{ for }[0,x_1)\text{ and }f'(x_1)=0.
\end{equation}

By \eqref{eq:g''_0} with $\ell=1$, we have $ f''(x)>0$ for $x\in [x_1,\infty)$ and hence
$ f'(x)>0$ for $x\in(x_1,\infty)$.

By \eqref{eq:g''}, we have $f''(x_1)>0$ for $\ell\geq 2$ and by continuity of $f'(x)$,
there exists $x_2$ such that $f'(x)>0$ for all $x\in(x_1, x_2]$.
As the assumption for proof by contradiction,
let us assume there exists $x_3(>x_2)$ such that
\begin{equation}
 f'(x)\geq 0 \text{ for }x\in[x_1,x_3)\text{ and }f'(x_3)=0.
\end{equation}
Then we have
\begin{equation}\label{integral.0}
 \int_{x_1}^{x_3}\frac{f''(x)}{f'(x)+\alpha_1}\rd x=\left[\log\{f'(x)+\alpha_1\}\right]_{x_1}^{x_3}=0.
\end{equation}
By \eqref{eq:g''}, we have
\begin{equation}
 \frac{f''(x)}{f'(x)+\alpha_1}=\frac{\ell-1}{x}+\frac{\sum_{i=2}^m i|\ell-i| \alpha_i x^{i-2}}{f'(x)+\alpha_1}
\end{equation}
for all $x\in(0,\infty)$ and
\begin{equation}\label{integral.1}
 \int_{x_1}^{x_3}\left(\frac{\ell-1}{x}+\frac{\sum_{i=2}^m i|\ell-i| \alpha_i x^{i-2}}{f'(x)+\alpha_1}\right)
\rd x
>(\ell-1)\log\frac{x_3}{x_1}>0,
\end{equation}
which contradicts \eqref{integral.0}. 
Hence, for $\ell\geq 2$, we have $f'(x)>0$ for $(x_1,\infty)$
and the result follows.
\end{proof}

\begin{lemma}\label{thm:Cardano}
Let 
\begin{equation*}
 F(x)=-x-\gamma_2\frac{x^2}{2}+\gamma_4\frac{x^4}{4!},
\end{equation*}
where $\gamma_2>0$ and 
\begin{equation}\label{hanbetsu}
9\gamma_4> 8\gamma_2^3.
\end{equation}
Let $\zeta=8\gamma_2^3/(9\gamma_4)$.
Then, 
\begin{equation*}
 \min_{x\geq 0} F(x)
 =-\frac{9}{16\gamma_2}
\Bigl(2\psi(\zeta)+\{\psi(\zeta)\}^2\Bigr),
\end{equation*}
where $\psi(\zeta)=\zeta^{1/3}\left\{(1+\sqrt{1-\zeta})^{1/3}+(1-\sqrt{1-\zeta})^{1/3}\right\}$.
\end{lemma}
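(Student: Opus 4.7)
The plan is to locate the critical point of $F$ on $(0,\infty)$, recognize it as an explicit function of $\psi(\zeta)$, and then exploit the critical-point equation to reduce the quartic value $F(x^\star)$ to a simple quadratic expression in $x^\star$. Throughout, note that the hypothesis $9\gamma_4>8\gamma_2^3$ together with $\gamma_2>0$ forces $\gamma_4>0$ and $0<\zeta<1$, so $\psi(\zeta)$ is real and well-defined.

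First I would establish the algebraic identity
\begin{equation}
\psi(\zeta)^3-3\zeta\,\psi(\zeta)-2\zeta=0.
\end{equation}
Writing $u=\zeta^{1/3}(1+\sqrt{1-\zeta})^{1/3}$ and $v=\zeta^{1/3}(1-\sqrt{1-\zeta})^{1/3}$, one checks $uv=\zeta$ and $u^3+v^3=2\zeta$, whence $(u+v)^3=u^3+v^3+3uv(u+v)=2\zeta+3\zeta(u+v)$, giving the claim since $\psi=u+v$. I would then introduce the candidate critical point
\begin{equation}
x^\star=\frac{3}{2\gamma_2}\,\psi(\zeta),
\end{equation}
and verify directly that $F'(x^\star)=\gamma_4 x^{\star 3}/6-\gamma_2 x^\star-1=0$: the substitution reduces this to $\psi^3=3\zeta\psi+2\zeta$, with $\zeta=8\gamma_2^3/(9\gamma_4)$, which is exactly the identity above.

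Next I would argue $x^\star$ is the unique minimizer of $F$ on $[0,\infty)$. Since $F''(x)=\gamma_4 x^2/2-\gamma_2$, the derivative $F'$ is strictly decreasing on $(0,\sqrt{2\gamma_2/\gamma_4})$ and strictly increasing on $(\sqrt{2\gamma_2/\gamma_4},\infty)$; combined with $F'(0)=-1<0$ and $F'(x)\to+\infty$, this shows $F'$ has a unique zero $x^\star$ on $(0,\infty)$, with $F'<0$ on $(0,x^\star)$ and $F'>0$ on $(x^\star,\infty)$. Hence $x^\star$ is the global minimizer on $[0,\infty)$.

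Finally, I would evaluate $F(x^\star)$ by using $F'(x^\star)=0$, which gives $\gamma_4 x^{\star 4}/6=x^\star+\gamma_2 x^{\star 2}$, so $\gamma_4 x^{\star 4}/24=(x^\star+\gamma_2 x^{\star 2})/4$. Substituting into $F$ collapses the quartic to
\begin{equation}
F(x^\star)=-x^\star-\frac{\gamma_2 x^{\star 2}}{2}+\frac{x^\star+\gamma_2 x^{\star 2}}{4}=-\frac{3x^\star+\gamma_2 x^{\star 2}}{4}.
\end{equation}
Plugging $x^\star=3\psi(\zeta)/(2\gamma_2)$ yields $3x^\star+\gamma_2 x^{\star 2}=(9/(4\gamma_2))(2\psi(\zeta)+\psi(\zeta)^2)$, hence the stated formula. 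The main conceptual step is the recognition that $\psi(\zeta)$ is exactly the Cardano-type root of $\psi^3-3\zeta\psi-2\zeta=0$, which provides the clean closed form; the rest is a routine calculus and arithmetic verification.
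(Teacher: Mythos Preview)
Your proof is correct and follows the same Cardano-based approach as the paper: identify the critical point $x^\star$ as the unique positive root of the depressed cubic $F'(x)=0$, express it via $\psi(\zeta)$, and evaluate $F(x^\star)$. The execution differs in two minor but pleasant ways. First, for uniqueness the paper simply invokes the cubic discriminant condition $9\gamma_4>8\gamma_2^3$, whereas you give an elementary calculus argument via the sign of $F''$; both are fine. Second, and more notably, your evaluation of $F(x^\star)$ is cleaner: you use the critical-point relation $\gamma_4 x^{\star 3}/6=\gamma_2 x^\star+1$ to collapse the quartic $F(x^\star)$ to the quadratic $-(3x^\star+\gamma_2 x^{\star 2})/4$ before substituting, while the paper expands $x_*^2$ and $x_*^4$ directly in terms of $(1\pm Z)^{1/3}$ and then simplifies using the same cube-sum identity. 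Your route avoids that longer expansion and arrives at the same closed form with less algebra.
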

\begin{proof}
The derivative of $F(x)$ is given by
\begin{equation*}
 F'(x)=-1-\gamma_2x+\gamma_4\frac{x^3}{3!}
=\frac{\gamma_4}{6}\left(-6\frac{1}{\gamma_4}-6\frac{\gamma_2}{\gamma_4}x+x^3\right).
\end{equation*}
Note \eqref{hanbetsu} is regarded as the discriminant of the cubic equation $ F'(x)=0$.
Then Cardano's formula gives
the unique real solution of $F'(x)=0$, 
\begin{equation}\label{xast}
\begin{split}
 x_*&=\left\{\frac{3}{\gamma_4}+\sqrt{\displaystyle \bigl(\frac{3}{\gamma_4}\bigr)^2
-\bigl(\frac{2\gamma_2}{\gamma_4}\bigr)^3}\right\}^{1/3}+ 
\left\{\frac{3}{\gamma_4}-\sqrt{\displaystyle \bigl(\frac{3}{\gamma_4}\bigr)^2
-\bigl(\frac{2\gamma_2}{\gamma_4}\bigr)^3}\right\}^{1/3}\\
&=\frac{3\zeta^{1/3}}{2\gamma_2}\{(1+Z)^{1/3}+(1-Z)^{1/3}\},
\end{split} 
\end{equation}
where $Z=\sqrt{1-\zeta}$ and that
\begin{equation}
 F'(x)<0 \text{ for }0\leq x<x_*, \text{ and }F'(x)>0 \text{ for } x>x_*.
\end{equation}
For $x=x_*$ given by \eqref{xast}, we have
\begin{equation}\label{Fxast}
\begin{split}
 F(x_*)&=-x_*-\gamma_2\frac{x_*^2}{2}+\gamma_4\frac{x_*^4}{4!} \\
&=-
\frac{3\zeta^{1/3}}{2\gamma_2}\{(1+Z)^{1/3}+(1-Z)^{1/3}\} \\
&\quad -\frac{\gamma_2}{2}
\frac{9\zeta^{2/3}}{4\gamma_2^2}
\{(1+Z)^{1/3}+(1-Z)^{1/3}\}^2\\
&\quad +\frac{\gamma_4}{24}
\frac{81\zeta^{4/3}}{16\gamma_2^4}
\{(1+Z)^{1/3}+(1-Z)^{1/3}\}^4. 
\end{split}
\end{equation}
 Note $ 1-Z^2=\zeta$,
\begin{equation}
\begin{split}
 \{(1+Z)^{1/3}+(1-Z)^{1/3}\}^3
&=2+3(1-Z^2)^{1/3}\{(1+Z)^{1/3}+(1-Z)^{1/3}\}\\
&=2+3\zeta^{1/3}\{(1+Z)^{1/3}+(1-Z)^{1/3}\},
\end{split} 
\end{equation}
and
\begin{equation}
 \frac{\gamma_4}{24}
\frac{81\zeta^{4/3}}{16\gamma_2^4}=\frac{3}{16}\frac{9\gamma_4}{8\gamma_2^3}\frac{\zeta^{4/3}}{\gamma_2}
=\frac{3\zeta^{1/3}}{16\gamma_2}.
\end{equation}
Then, for the last term of $F(x_*)$ given by \eqref{Fxast}, we have
\begin{equation}\label{Fxast.last}
 \begin{split}
& \frac{\gamma_4}{24}
\frac{81\zeta^{4/3}}{16\gamma_2^4}
\{(1+Z)^{1/3}+(1-Z)^{1/3}\}^4 \\
&=\frac{3\zeta^{1/3}}{8\gamma_2}
\{(1+Z)^{1/3}+(1-Z)^{1/3}\}
+\frac{9\zeta^{2/3}}{16\gamma_2}
\{(1+Z)^{1/3}+(1-Z)^{1/3}\}^2.
 \end{split}
\end{equation}
Then, by \eqref{Fxast} and \eqref{Fxast.last}, we have
\begin{equation*}
\begin{split}
 F(x_*)&=-
\frac{9\zeta^{1/3}}{8\gamma_2}
\{(1+Z)^{1/3}+(1-Z)^{1/3}\}
-\frac{9\zeta^{2/3}}{16\gamma_2}
\{(1+Z)^{1/3}+(1-Z)^{1/3}\}^2 \\
&=-\frac{9}{16\gamma_2}
\Bigl(2\psi(\zeta)+\{\psi(\zeta)\}^2\Bigr),
\end{split} 
\end{equation*}
which completes the proof.
\end{proof}

\section{Proof of Lemma \ref{lem:Psi}}
\label{sec:lem:Psi}
Recall, as in \eqref{Psi.b.c}, \eqref{tautau}, and \eqref{G},
\begin{align}\label{Psi.b.c.1}
 \Psi(b,q) =\frac{4}{3}\frac{b}{1-b}\frac{b+q+1}{b+q+2}
-2\psi\bigl\{\zeta(b,q)\bigr\}-\Bigl[\psi\bigl\{\zeta(b,q)\bigr\}\Bigr]^2,
\end{align}
where
\begin{equation}\label{tautau.1}
 \zeta(b,q)=\frac{2}{9}\frac{b^2}{(b+1)(b+2)}\frac{(b+q+3)(b+q+4)}{(b+q+2)^2},
\end{equation}
and
\begin{equation}\label{G.1}
 \psi(\zeta)
=\bigl\{\zeta(1+\sqrt{1-\zeta})\bigr\}^{1/3} + \bigl\{\zeta(1-\sqrt{1-\zeta})\bigr\}^{1/3}.
\end{equation}

[Part \ref{lem:Psi.2}] 
The first term of $\Psi(b,q)$ is increasing in $q$.
In the second and third terms of $\Psi(b,q)$, $\zeta(b,q)$ is decreasing in $q$.
Hence it suffices to show that $\psi(\zeta)$ is increasing in $\zeta$.
Note the second term of $\psi(\zeta)$ is increasing in $\zeta$. 
For the first term of $\psi(\zeta)$, the derivative of $\zeta(1+\sqrt{1- \zeta})$ is
\begin{equation}
\begin{split}
&1+\sqrt{1- \zeta}-\frac{1}{2}\frac{ \zeta}{\sqrt{1- \zeta}} 
=\frac{\sqrt{1- \zeta}+1-(3/2) \zeta}{\sqrt{1- \zeta}} \\
&\geq \frac{1- \zeta+1-(3/2) \zeta}{\sqrt{1- \zeta}} 
= \frac{1}{2}\frac{4-5 \zeta}{\sqrt{1- \zeta}} \geq 0,
\end{split} 
\end{equation}
where the inequalities follow from the fact $ \zeta\in(0, 1/9)$. This completes the proof.

[Part \ref{lem:Psi.1}] It follows from the verified computation.

\section{Interval arithmetic}\label{sec:interval}
In this paper, we employed the interval arithemetic \citep{Moore-2009} to rigorously bound the value of Stein's unbiased risk estimate.
We used the python package pyinterval (\texttt{https://github.com/taschini/pyinterval}).
Here, we briefly explain the idea of the interval arithmetic.
See \cite{Moore-2009} for more details.

In the interval arithemetic, each number is represented by an interval that includes it.
For example, $\sqrt{2}$ can be represented by $[1.41, 1.42]$.
Such a representation enables to obtain a rigorous bound of numerical computation results  accounting for the rounding error.
For example, by representing $\sqrt{2}$ and $\sqrt{3}$ by $[1.41, 1.42]$ and $[1.73, 1.74]$ respectively, $\sqrt{2}+\sqrt{3}$ is guaranteed to be included in $[1.41+1.73,1.42+1.74]=[3.14,3.16]$.
Functions of intervals are defined in a similar way.
The interval Newton method (Algorithm~\ref{alg:inewton}) outputs an interval that includes the zero point of a given function.

\begin{algorithm}[H]
	\caption{Interval Newton method}
	\label{alg:inewton}
	\begin{algorithmic}[1]
			\REQUIRE{$f: \mathbb{R} \to \mathbb{R}$ such that $f(\bar{x})=0$ \& $(l_0,u_0)$ such that $\bar{x} \in [l_0, u_0]$ \& MAX\_ITER}
			\ENSURE{$(l,u)$ such that $\bar{x} \in [l,u]$}
			\STATE{$l \gets l_0$}
			\STATE{$u \gets u_0$}
			\FOR{$i=1,\dots,\mathrm{MAX\_ITER}$}
			\STATE{$m \gets (l+u)/2$}
			\STATE{$[a,b] \gets m-f(m)/f'([l,u])$ \% interval division}
			\STATE{$l \gets \max(l,a)$}
			\STATE{$u \gets \min(u,b)$}
			\ENDFOR
		\end{algorithmic}
\end{algorithm}

\end{document}